\newtheorem{theorem}{Theorem}
\numberwithin{theorem}{section}
\newtheorem{proposition}[theorem]{Proposition}
\newtheorem{lemma}[theorem]{Lemma}
\newtheorem{corollary}[theorem]{Corollary}
\theoremstyle{definition}
\newtheorem{example}[theorem]{Example}
\newtheorem{remark}[theorem]{Remark}
\crefname{question}{question}{questions}
\newcommand{\N}{\mathbb{N}}
\newcommand{\Z}{\mathbb{Z}}
\newcommand{\Q}{\mathbb{Q}}
\newcommand{\m}{\mathfrak{m}}
\newcommand{\PP}{\mathbb{P}}
\newcommand{\F}{\mathbb{F}}
\DeclareMathOperator{\mono}{mono}
\DeclareMathOperator{\Spec}{Spec}
\DeclareMathOperator{\codim}{codim}
\DeclareMathOperator{\Mono}{Mono}
\DeclareMathOperator{\Min}{Min}
\DeclareMathOperator{\Ass}{Ass}
\DeclareMathOperator{\ch}{char}
\DeclareMathOperator{\reg}{reg}
\DeclareMathOperator{\soc}{soc}
\newcommand{\arxiv}[1]{\href{http://arxiv.org/abs/#1}{{\tt arXiv:#1}}}
\begin{document}
\title{Mono: an algebraic study of torus closures}
\author{Justin Chen}
\address{Department of Mathematics, University of California, Berkeley,
California, 94720 U.S.A}
\email{jchen@math.berkeley.edu}

\subjclass[2010]{{13D02, 13C99, 13A02, 05E40}} 

\begin{abstract}
Given an ideal $I$ in a polynomial ring, we consider the largest monomial subideal contained in $I$, denoted $\mono(I)$. We study mono as an interesting operation in its own right, guided by questions that arise from comparing the Betti tables of $I$ and $\mono(I)$. Many examples are given throughout to illustrate the phenomena that can occur.
\end{abstract}

\maketitle

Let $R = k[x_1, \ldots, x_n]$ be a polynomial ring over a field $k$ in $n$ variables. For any ideal $I \subseteq R$, let $\mono(I)$ denote the largest monomial subideal of $I$, i.e. the ideal generated by all monomials contained in $I$. Geometrically, $\mono(I)$ defines the smallest torus-invariant subscheme containing $V(I) \subseteq \Spec R$ (the so-called \textit{torus-closure} of $V(I)$). 

The concept of mono has been relatively unexplored, despite the naturality of the definition. The existing work in the literature concerning mono has been essentially algorithmic and/or computational. For convenience, we summarize this in the following two theorems: 

\begin{theorem}[\cite{SST}, Algorithm 4.4.2]
Let $I = (f_1, \ldots, f_r)$. Fix new variables $y_1, \ldots, y_n$, and let $\widetilde{f_i} := f_i(\frac{x_1}{y_1}, \ldots, \frac{x_n}{y_n}) \cdot \prod_{i=1}^n y_i^{\deg_{x_i}(f)}$ be the multi-homogenization of $f_i$ with respect to $\underline{y}$. Let $>$ be an elimination term order on $k[\underline{x}, \underline{y}]$ satisfying $y_i > x_j$ for all $i, j$. If $\mathcal{G}$ is a reduced Gr\"obner basis for $(\widetilde{f_1}, \ldots, \widetilde{f_r}) : (\prod_{i=1}^n y_i)^\infty$ with respect to $>$, then the monomials in $\mathcal{G}$ generate $\mono(I)$.
\end{theorem}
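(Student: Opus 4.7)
The plan is to reduce the statement to the cleaner algebraic identity $J \cap k[\underline{x}] = \mono(I)$, where $J := (\widetilde{f_1}, \ldots, \widetilde{f_r}) : (\prod y_i)^\infty$, and then extract what the elimination order produces.

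First I would establish $J \cap k[\underline{x}] = \mono(I)$. The multi-homogenization makes every $\widetilde{f_i}$ multi-homogeneous with respect to the $\Z^n$-grading $\deg x_j = \deg y_j = e_j$, so $\widetilde{I}$ and its $\prod y_i$-saturation $J$ are multi-homogeneous ideals. Since the $\Z^n$-graded piece of $k[\underline{x}]$ in multi-degree $(a_1, \ldots, a_n)$ is spanned by the single monomial $\underline{x}^a$, any element of $J \cap k[\underline{x}]$ decomposes into monomials that themselves lie in $J$; hence $J \cap k[\underline{x}]$ is a monomial ideal. For the inclusion $J \cap k[\underline{x}] \subseteq \mono(I)$, setting $\underline{y} = \underline{1}$ in $\widetilde{f_i}$ recovers $f_i$, so the substitution $\underline{y} \mapsto \underline{1}$ sends $J$ into $I$; any monomial in $J \cap k[\underline{x}]$ is fixed by this substitution and so lies in $I$, whence in $\mono(I)$. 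For the reverse inclusion, given a monomial $\underline{x}^a \in I$, write $\underline{x}^a = \sum g_i f_i$ and apply the substitution $x_j \mapsto x_j/y_j$ followed by clearing denominators: this yields $\underline{x}^a \cdot \underline{y}^b = \sum h_i \widetilde{f_i}$ in $k[\underline{x}, \underline{y}]$ for some exponent $b \in \N^n$, so $\underline{x}^a \underline{y}^b \in \widetilde{I}$, and saturating by $\prod y_i$ gives $\underline{x}^a \in J$.

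Next I would invoke the standard elimination property: because the term order satisfies $y_i > x_j$, the elements of the reduced Gr\"obner basis $\mathcal{G}$ that lie in $k[\underline{x}]$ form a reduced Gr\"obner basis of $J \cap k[\underline{x}] = \mono(I)$. Since $\mono(I)$ is a monomial ideal, its reduced Gr\"obner basis (in any order) is exactly its minimal set of monomial generators, so every element of $\mathcal{G} \cap k[\underline{x}]$ is a monomial. Conversely, any monomial $\underline{x}^a \underline{y}^b \in \mathcal{G}$ must have $b = 0$: otherwise $\prod y_i$-saturation of $J$ places $\underline{x}^a$ into $J$, and a reduced Gr\"obner basis cannot contain two elements one of whose leading terms properly divides the other's. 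Thus the monomials of $\mathcal{G}$ are precisely $\mathcal{G} \cap k[\underline{x}]$, which is a generating set (in fact minimal) of $\mono(I)$.

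The step I expect to require the most care is the multi-homogenization lift $\underline{x}^a = \sum g_i f_i \Longrightarrow \underline{x}^a \underline{y}^b \in \widetilde{I}$. Unlike single-variable homogenization, multi-homogenization is not simply $\widetilde{g_i}\widetilde{f_i} = \widetilde{g_i f_i}$; the two differ by a monomial in $\underline{y}$, and one has to choose the exponents $N_j$ in $\prod y_j^{N_j}$ large enough in each coordinate to clear all denominators simultaneously. Once this bookkeeping is done, the rest is formal.
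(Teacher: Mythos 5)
This statement is quoted from the literature (Saito--Sturmfels--Takayama, Algorithm 4.4.2) and the paper supplies no proof of its own, so there is nothing internal to compare against; your argument is correct and is essentially the standard justification of that algorithm. You correctly reduce to the identity $J \cap k[\underline{x}] = \mono(I)$ (monomiality of $J \cap k[\underline{x}]$ from the $\Z^n$-multigrading with $\deg x_j = \deg y_j = e_j$, one inclusion from the substitution $\underline{y} \mapsto \underline{1}$, the other from clearing denominators and saturating), then apply the elimination theorem together with the fact that the reduced Gr\"obner basis of a monomial ideal is its minimal monomial generating set, and you rightly add the check -- using that $J$ is saturated with respect to $\prod y_i$ and that leading terms in a reduced basis are divisibility-incomparable -- that no monomial of $\mathcal{G}$ can involve the $y$-variables, so the monomials of $\mathcal{G}$ are exactly $\mathcal{G} \cap k[\underline{x}]$.
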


Cf. also \cite{M} for a generalization computing the largest $A$-graded subideal of an ideal, for an integer matrix $A$ ($\mono$ being the special case when $A$ is the identity matrix). The next theorem gives an alternate description of $\mono$ for a particular class of ideals, involving the dual concept of $\Mono$, which is the smallest monomial ideal containing a given ideal (notice that $\Mono(I)$ is very simple to compute, being generated by all terms appearing in a generating set of $I$).

\begin{theorem}[\cite{PUV}, Lemma 3.2]
Let $I$ be an unmixed ideal, and suppose there exists a regular sequence $\underline{\beta} \subseteq I$ consisting of $\codim I$ monomials. Then $\mono(I) = (\underline{\beta}) : \Mono( (\underline{\beta}) : I)$.
\end{theorem}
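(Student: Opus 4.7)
\medskip

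\noindent\textbf{Proof proposal.} Set $J := (\underline{\beta})$ and $K := J : I$, so that the claimed identity is $\mono(I) = J : \Mono(K)$. The plan is to prove the two inclusions separately, with the reverse inclusion being the place where the unmixedness hypothesis enters through classical linkage.

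For the forward inclusion $\mono(I) \subseteq J : \Mono(K)$, I would take a monomial $m \in \mono(I)$ and a generator $x^\alpha$ of $\Mono(K)$, which by definition is a term appearing in some $f = \sum_\alpha c_\alpha x^\alpha \in K$. Since $f \in K = J : I$ and $m \in I$, we have $fm = \sum_\alpha c_\alpha (x^\alpha m) \in J$. Because $J$ is a monomial ideal, every term of an element of $J$ lies in $J$; hence each $x^\alpha m \in J$. This shows $m \cdot \Mono(K) \subseteq J$, giving $m \in J : \Mono(K)$.

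For the reverse inclusion $J : \Mono(K) \subseteq \mono(I)$, I first observe that $J$ and $\Mono(K)$ are both monomial, so $J : \Mono(K)$ is monomial; thus it suffices to show every monomial generator $m$ lies in $I$. Since $K \subseteq \Mono(K)$, we have $m \in J : \Mono(K) \subseteq J : K = J : (J:I)$. At this point I would invoke the linkage formula: because $J$ is a complete intersection of codimension $c = \codim I$ contained in the unmixed ideal $I$, and $R$ is Cohen--Macaulay, $J$ and $J:I$ are geometrically linked and $J : (J : I) = I$. Therefore $m \in I$, and since $m$ is a monomial, $m \in \mono(I)$, completing the argument.

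The main obstacle, such as it is, is citing the right form of the linkage formula: one needs $J : (J:I) = I$, which depends precisely on the hypothesis that $I$ is unmixed of codimension $\codim J$ (so that $I$ has no embedded primes and no minimal primes of excess height). Everything else is a direct manipulation that exploits the asymmetry between the monomial ideal $J$ (whose elements split into monomial terms lying in $J$) and the general ideals $I$ and $K$ (which are approximated by $\mono$ and $\Mono$ respectively).
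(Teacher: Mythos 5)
Your argument is correct. Note that the paper does not prove this statement at all --- it is quoted from \cite{PUV}, Lemma 3.2 as background --- so there is no internal proof to compare against; your proof is essentially the expected one. The forward inclusion correctly exploits that $J=(\underline{\beta})$ is monomial, so that $fm\in J$ forces each term $c_\alpha x^\alpha m$ into $J$ (no cancellation occurs since $m$ is a monomial), and the reverse inclusion correctly reduces, via $K\subseteq\Mono(K)$ and the fact that $J:\Mono(K)$ is a monomial ideal, to the double-linkage identity $J:(J:I)=I$. Two small points of precision: that identity requires $R$ Gorenstein (not merely Cohen--Macaulay) together with $I$ unmixed of height equal to the length of the regular sequence --- harmless here since $R$ is a polynomial ring, but worth stating correctly; and the linkage is between $I$ and $J:I$ (via the complete intersection $J$), and is algebraic linkage in general --- ``geometrically linked'' is a stronger condition (no common components) that you neither need nor have verified. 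With those wordings fixed, the proof stands as written.
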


However, it appears that no systematic study of $\mono$ as a operation on ideals has yet been made. It is the goal of this note to provide first steps in this direction; in particular exploring the relationship between $I$ and $\mono(I)$. By way of understanding $\mono$ as an algebraic process, we consider the following questions:

\begin{enumerate}

\item \label[question]{Q1} When is $\mono(I) = 0$, or prime, or primary, or radical? 

\item \label[question]{Q2} To what extent does taking $\mono$ depend on the ground field?

\item \label[question]{Q3} Which invariants are preseved by taking $\mono$? For instance, do $I$ and $\mono(I)$ have the same (Castelnuovo-Mumford) regularity?

\item \label[question]{Q4} How do the Betti tables of $I$ and $\mono(I)$ compare? Do they have the same shape?

\item \label[question]{Q5} To what extent is $\mono$ non-unique? E.g. which monomial ideals arise as $\mono$ of a non-monomial ideal?

\item \label[question]{Q6} What properties of $I$ are preseved by $\mono(I)$, and conversely, what properties of $I$ are reflected by $\mono(I)$?

\newcounter{enumTemp}
\setcounter{enumTemp}{\theenumi}

\end{enumerate}

\section{Basic properties}

We first give some basic properties of $\mono$, which describe how $\mono$ interacts with various algebraic operations. As above, $R$ denotes a polynomial ring $k[x_1, \ldots, x_n]$, and $\m = (x_1, \ldots, x_n)$ denotes the homogeneous maximal ideal of $R$.

\begin{proposition} \label{basicProp}

Let $I$ be an $R$-ideal.

\begin{enumerate}

\item $\mono$ is decreasing, inclusion-preserving, and idempotent.

\item $\mono$ commutes with radicals, i.e. $\mono(\sqrt{I}) = \sqrt{\mono(I)}$.

\item $\mono$ commutes with intersections, i.e. $\mono(\bigcap I_i) = \bigcap \mono(I_i)$ for any ideals $I_i$.

\item $\mono(I_1) \mono(I_2) \subseteq \mono(I_1 I_2) \subseteq \mono(I_1) \cap \mono(I_2)$.

\end{enumerate}
\end{proposition}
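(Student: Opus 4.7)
The plan is to prove all four parts directly from the definition of $\mono(I)$ as the ideal generated by the set of monomials of $R$ lying in $I$, using two recurring facts: (a) any ideal generated by monomials that is contained in $I$ is contained in $\mono(I)$, and (b) standard operations (radical, intersection) on monomial ideals again yield monomial ideals.

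For (1), all three properties are immediate: $\mono(I) \subseteq I$ by construction; if $I \subseteq J$ then every monomial in $I$ is a monomial in $J$, so $\mono(I) \subseteq \mono(J)$; and since $\mono(I)$ is itself generated by monomials, it coincides with its own largest monomial subideal, giving idempotence.

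For (2), both inclusions will be proved by the universal property (a). For $\sqrt{\mono(I)} \subseteq \mono(\sqrt{I})$, the radical of a monomial ideal is a monomial ideal, and since $\mono(I) \subseteq I$ gives $\sqrt{\mono(I)} \subseteq \sqrt{I}$, applying (a) yields the inclusion. For the reverse, a monomial $m \in \mono(\sqrt{I})$ satisfies $m^k \in I$ for some $k$, and since $m^k$ is itself a monomial, $m^k \in \mono(I)$, so $m \in \sqrt{\mono(I)}$. For (3), $\bigcap \mono(I_i)$ is an intersection of monomial ideals, hence a monomial ideal, and it is contained in each $I_i$, so it lies in $\mono(\bigcap I_i)$; conversely, monotonicity from (1) gives $\mono(\bigcap I_i) \subseteq \bigcap \mono(I_i)$. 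Finally for (4), the left inclusion follows because $\mono(I_1) \mono(I_2)$ is generated by products of monomials in $I_1$ and $I_2$, which are monomials lying in $I_1 I_2$, so applying (a) to this monomial ideal gives containment in $\mono(I_1 I_2)$; the right inclusion follows from $I_1 I_2 \subseteq I_1 \cap I_2$ together with monotonicity and part (3).

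None of the steps present a genuine obstacle; the closest thing to a subtle point is the $\supseteq$ direction of (2), where one must remember to raise the monomial $m$ to a sufficiently high power before extracting membership in $\mono(I)$ — but this is exactly the analogue for monomials of the usual definition of the radical. Throughout, the strategy is the same two-line template: verify that the candidate subideal is generated by monomials, and verify that it lies inside the target ideal.
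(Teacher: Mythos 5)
Your proof is correct and follows essentially the same route as the paper: part (1) from the definition, part (2) by comparing monomials and using that radicals/powers of monomials are monomials, part (3) via monotonicity plus the fact that an intersection of monomial ideals is monomial, and part (4) from the product of monomial ideals being monomial together with (1) and (3) applied to $I_1 I_2 \subseteq I_1 \cap I_2$. Your explicit appeal to the radical of a monomial ideal being monomial in (2) is a point the paper leaves implicit, but the argument is the same.
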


\begin{proof}
\begin{enumerate}

\item Each property -- $\mono(I) \subseteq I$, $I_1 \subseteq I_2 \implies \mono(I_1) \subseteq \mono(I_2)$, $\mono(\mono(I)) = \mono(I)$ -- is clear from the definition.

\item If $u \in \sqrt{\mono(I)}$ is a monomial, say $u^m \in \mono(I)$, then $u \in \sqrt{I} \implies u \in \mono(\sqrt{I})$. Conversely, if $u \in \mono(\sqrt{I})$ is monomial, then $u \in \sqrt{I}$, say $u^m \in I$ and hence $u^m \in \mono(I) \implies u \in \sqrt{\mono(I)}$.

\item $\bigcap I_i \subseteq I_i \implies \mono(\bigcap I_i) \subseteq \mono(I_i)$, hence $\mono(\bigcap I_i) \subseteq \bigcap \mono(I_i)$. On the other hand, an arbitrary intersection of monomial ideals is monomial, 
and $\bigcap \mono(I_i) \subseteq \bigcap I_i$, hence $\bigcap \mono(I_i) \subseteq \mono(\bigcap I_i)$. 

\item $\mono(I_1) \mono(I_2) \subseteq I_1 I_2$, and a product of two monomial ideals is monomial, hence $\mono(I_1) \mono(I_2) \subseteq \mono(I_1 I_2)$. The second containment follows from applying (1) and (3) to the containment $I_1 I_2 \subseteq I_1 \cap I_2$. \qedhere

\end{enumerate}
\end{proof}

Next, we consider how prime and primary ideals behave under taking $\mono$:

\begin{proposition} \label{primeProp}
Let $I$ be an $R$-ideal. 
\begin{enumerate}

\item If $I$ is prime resp. primary, then so is $\mono(I)$.

\item $\Ass(R/ \mono(I)) \subseteq \{ \mono(P) \mid P \in \Ass(R/I) \}$.

\item $\mono(I)$ is prime iff $\mono(I) = \mono(P)$ for some minimal prime $P$ of $I$. In particular, $\mono(I) = 0$ iff $\mono(P) = 0$ for some $P \in \Min(I)$.

\end{enumerate}
\end{proposition}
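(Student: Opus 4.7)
The plan is to handle the three parts in order, using the groundwork of \Cref{basicProp} (especially that $\mono$ commutes with radicals and intersections).

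For (1), my approach is to first record the standard fact that a monomial ideal $J$ is prime (resp.\ primary) iff for all \emph{monomials} $u, v$, the condition $uv \in J$ forces $u \in J$ or $v \in J$ (resp.\ $u \in J$ or $v \in \sqrt{J}$). The prime case reduces to checking that minimal monomial generators of $J$ must be single variables; the primary case is analogous. Once this is in hand, the proposition is immediate: if $I$ is prime and $u, v$ are monomials with $uv \in \mono(I) \subseteq I$, then $u$ or $v$ lies in $I$, hence in $\mono(I)$. The primary case is the same, using \Cref{basicProp}(2) to identify $\sqrt{\mono(I)}$ with $\mono(\sqrt{I})$.

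For (2), the key idea is to push a primary decomposition of $I$ through $\mono$. Write $I = \bigcap_i Q_i$ with $\sqrt{Q_i} = P_i \in \Ass(R/I)$. By \Cref{basicProp}(3), $\mono(I) = \bigcap_i \mono(Q_i)$, and by part (1) together with \Cref{basicProp}(2) each $\mono(Q_i)$ is $\mono(P_i)$-primary. This is a (possibly redundant, possibly with repeated radicals) primary decomposition of $\mono(I)$, so every associated prime of $R/\mono(I)$ appears among the $\mono(P_i)$.

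For (3), I expect one direction ($\Leftarrow$) to be immediate from (1). For ($\Rightarrow$), suppose $\mono(I)$ is prime. Since a prime ideal is the unique associated prime of its own quotient, $\mono(I) \in \Ass(R/\mono(I))$, so by (2) there is some $P \in \Ass(R/I)$ with $\mono(I) = \mono(P)$. To upgrade $P$ to a minimal prime, pick $Q \in \Min(I)$ with $Q \subseteq P$; monotonicity of $\mono$ gives $\mono(I) \subseteq \mono(Q) \subseteq \mono(P) = \mono(I)$, forcing equality. The ``in particular'' clause is then a direct specialization: $\mono(I) = 0$ is prime (since $R$ is a domain), so the criterion applies; conversely, if $\mono(P) = 0$ for some $P \in \Min(I)$ then $\mono(I) \subseteq \mono(P) = 0$.

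The only mildly subtle point is the monomial characterization of primary monomial ideals used in (1); everything else is bookkeeping with \Cref{basicProp}. I do not anticipate a serious obstacle.
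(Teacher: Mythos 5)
Your argument is correct, and parts (1) and (2) coincide with the paper's proof: the same reduction of the prime/primary condition to monomials (which the paper, like you, asserts rather than proves in the primary case), and the same device of applying $\mono$ to a primary decomposition of $I$ via \Cref{basicProp}(2),(3) — you are in fact slightly more careful than the paper in noting that the resulting decomposition of $\mono(I)$ may be redundant. The only genuine divergence is in the forward direction of (3). The paper argues directly: since $\mono(I)$ is prime it is radical, so $\mono(I) = \mono(\sqrt{I}) = \bigcap_{P \in \Min(I)} \mono(P)$ by \Cref{basicProp}(2),(3), and a prime ideal equal to a finite intersection must equal one of the terms. You instead invoke $\Ass(R/\mono(I)) = \{\mono(I)\}$ (valid, as a prime is the unique associated prime of its quotient), apply part (2) to get $\mono(I) = \mono(P)$ for some $P \in \Ass(R/I)$, and then descend to a minimal prime $Q \subseteq P$ using $I \subseteq Q \subseteq P$ and monotonicity of $\mono$, which correctly forces $\mono(I) = \mono(Q)$. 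Both routes work; the paper's is marginally more self-contained, bypassing associated primes and landing on $\Min(I)$ in one step, while yours buys a uniform reuse of part (2) at the cost of the extra descent step from $\Ass$ to $\Min$. The handling of the ``in particular'' clause is equivalent in both.
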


\begin{proof}
\begin{enumerate}

\item To check that $\mono(I)$ is prime (resp. primary), it suffices to check that if $u, v$ are monomials with $uv \in \mono(I)$, then $u \in \mono(I)$ or $v \in \mono(I)$ (resp. $v^m \in \mono(I)$ for some $n$). But this holds, as $I$ is prime (resp. primary) and $u, v$ are monomials. 

\item If $I = Q_1 \cap \ldots \cap Q_r$ is a minimal primary decomposition of $I$, so that $\Ass(R/I) = \{ \sqrt{Q_i} \}$, then $\mono(I) = \mono(Q_1) \cap \ldots \cap \mono(Q_r)$ is a primary decomposition of $\mono(I)$, so every associated prime of $\mono(I)$ is of the form $\sqrt{\mono(Q_i)} = \mono(\sqrt{Q_i})$ for some $i$.

\item $\displaystyle \mono(I) = \sqrt{\mono(I)} = \mono(\sqrt{I}) = \bigcap_{P \in \Min(I)} \mono(P)$. Since $\mono(I)$ is prime and the intersection is finite, $\mono(I)$ must equal one of the terms in the intersection. The converse follows from (1). \qedhere

\end{enumerate}
\end{proof}

We now examine the sharpness of various statements in \Cref{basicProp,primeProp}:

\begin{example} \label{twoQuadricsEx}
Let $R = k[x,y]$, where $k$ is an infinite field, and let $I$ be an ideal generated by 2 random quadrics. Then $R/I$ is an Artinian complete intersection of regularity $2$, so $\m^3 \subseteq I$. By genericity, $I$ does not contain any monomials in degrees $\le \reg(R/I)$, so $\mono(I) = \m^3$. On the other hand, $I^2$ is a 3-generated perfect ideal of grade 2, so the Hilbert-Burch resolution of $R/I^2$ shows that $\reg R/I^2 = 4$, hence $\m^5 \subseteq I^2 \subseteq \m^4$ as $I^2$ is generated by quartics (in fact, $\mono(I^2) = \m^5$). Thus for such $I = I_1 = I_2$, both containments in \Cref{basicProp}(4) are strict.
\end{example}

Similar to \Cref{basicProp}(4), the containment in \Cref{primeProp}(2) is also strict in general: take e.g. $I = I' \cap \m^N$ where $\mono(I') = 0$ and $N > 0$ is such that $I' \not \subseteq \m^N$. However, combining these two statements yields:

\begin{corollary}
Let $I$ be an $R$-ideal.

\begin{enumerate}
\item Nonzerodivisors on $R/I$ are also nonzerodivisors on $R/\mono(I)$.

\item Let $u \in R$ be a monomial that is a nonzerodivisor on $R/I$. Then $\mono((u)I) \\ = (u) \mono(I)$.
\end{enumerate}
\end{corollary}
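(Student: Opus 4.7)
For (1), I would invoke \Cref{primeProp}(2) directly: by definition a nonzerodivisor on $R/I$ is an element of $R$ lying outside every $P \in \Ass(R/I)$, and by \Cref{primeProp}(2) every $Q \in \Ass(R/\mono(I))$ has the form $Q = \mono(P)$ for some $P \in \Ass(R/I)$, so in particular $Q \subseteq P$. Avoiding each $P$ therefore implies avoiding each $Q$, so a nonzerodivisor on $R/I$ is also a nonzerodivisor on $R/\mono(I)$.

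For (2), I would establish the two containments separately. The direction $(u)\mono(I) \subseteq \mono((u)I)$ is immediate from \Cref{basicProp}(4), using that $(u)$ is itself a monomial ideal so that $\mono((u)) = (u)$, and the product $(u)\mono(I)$ is then a monomial ideal sitting inside $(u)I$. The reverse direction is the substantive one: given a monomial $v \in (u)I$, write $v = uw$ for some $w \in I$. The key point, which uses that $R$ is a UFD and $u$ is a nonzero monomial, is that $u$ must divide $v$ monomially and $w = v/u$ is itself a monomial in $R$; comparing exponent vectors in $v = uw$ forces $w$ to be a single term $x^\beta$ with $u x^\beta = v$. Hence $w \in I$ is a monomial, so $w \in \mono(I)$, yielding $v = uw \in (u)\mono(I)$.

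The only step with real content is the exponent-vector observation in the second containment of (2), and it is a short direct verification rather than a genuine obstacle. It is worth noting that the nonzerodivisor hypothesis on $u$ does not actually enter this argument beyond $u \neq 0$; it fits naturally alongside (1), and via (1) it also guarantees that $u$ remains a nonzerodivisor on $R/\mono(I)$.
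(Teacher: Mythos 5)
Your proposal is correct. For (1) you argue exactly as the paper does: zerodivisors on $R/\mono(I)$ lie in the union of the associated primes, which by \Cref{primeProp}(2) are of the form $\mono(P) \subseteq P$ for $P \in \Ass(R/I)$. For (2), however, you take a genuinely different route. The paper's proof is the one-liner ``follows from (1) and \Cref{basicProp}(4)'': since $u$ is a monomial, \Cref{basicProp}(4) gives $(u)\mono(I) \subseteq \mono((u)I) \subseteq (u) \cap \mono(I)$, and by (1) $u$ is a nonzerodivisor on $R/\mono(I)$, so the standard identity $(u) \cap J = uJ$ for $u$ a nonzerodivisor modulo $J$ (applied to $J = \mono(I)$) collapses the chain to equalities. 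You instead prove the reverse containment element-wise: every element of $(u)I$ is $uh$ with $h \in I$, and since multiplication by a nonzero monomial in the domain $R$ preserves the number of terms, a monomial $v = uh$ forces $h$ to be a single term, hence (up to the harmless unit scalar, since $v$ is monic) a monomial of $I$, so $v \in (u)\mono(I)$. Your closing observation is right and is the real difference between the two arguments: your proof shows the nonzerodivisor hypothesis on $u$ is superfluous for (2) (only $u \ne 0$ and monomiality are used), i.e. you prove a slightly stronger statement, whereas the paper's hypothesis is what makes its shorter derivation from (1) and the identity $(u)\cap\mono(I)=(u)\mono(I)$ go through.
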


\begin{proof}
\begin{enumerate}
\item $\displaystyle \bigcup_{P \in \Ass(R/\mono(I))} P \subseteq \bigcup_{P \in \Ass(R/I)} \mono(P) \subseteq \bigcup_{P \in \Ass(R/I)} P$. 

\item This follows from (1) and \Cref{basicProp}(4). \qedhere
\end{enumerate}
\end{proof}

\begin{remark} \label{zeroRmk}
Since monomial prime ideals are generated by (sets of) variables, if $P \subseteq \m^2$ is a nondegenerate prime, then $\mono(P) = 0$. It thus follows from \Cref{primeProp}(3) that ``most" ideals $I$ satisfy $\mono(I) = 0$: namely, this is always the case unless each component of $V(I)$ is contained in some coordinate hyperplane in $\mathbb{A}^n = \Spec R$.
The case that $\mono(I)$ is prime is analogous: if $\mono(I) = (x_{i_1}, \ldots, x_{i_r})$ for some $\{i_1, \ldots, i_r\} \subseteq \{1, \ldots, n \}$, then $V(I)$ becomes nondegenerate upon restriction to the coordinate subspace $V(x_{i_1}, \ldots, x_{i_r}) \cong \mathbb{A}^r$ (i.e. $\mono(\overline{I}) = \overline{0}$ where $\overline{( \cdot )}$ denotes passage to the quotient $R/(x_{i_1}, \ldots, x_{i_r})$.)
\end{remark}

In contrast to the simple picture when $\mono(I)$ is prime, the case where $\mono(I)$ is primary is much more interesting, due to nonreducedness issues. The foremost instance of this case is when $\mono(I)$ is $\m$-primary, i.e. $\mono(I)$ is Artinian. A first indication that this case is interesting is that under this assumption, $\mono(I)$ is guaranteed not to be $0$. For this and other reasons soon to appear, we will henceforth deal primarily with this case -- the reader should assume from now on that $I$ is an Artinian ideal.

\section{Dependence on scalars}

We now briefly turn to \Cref{Q2}: to what extent does taking $\mono$ depend on the ground field $k$? To make sense of this, let $S = \Z[x_1, \ldots, x_n]$ be a polynomial ring over $\Z$. Then for any field $k$, the universal map $\Z \to k$ induces a ring map $S \to S_k := S \otimes_\Z k = k[x_1, \ldots, x_n]$. Given an ideal $I \subseteq S$, one can consider the extended ideal $IS_k$. The question is then: as the field $k$ varies, how does $\mono(IS_k)$ change?

It is easy to see that if $k_1$ and $k_2$ have the same characteristic, then $\mono(IS_{k_1})$ and $\mono(IS_{k_2})$ have identical minimal generating sets. Thus it suffices to consider prime fields $\Q$ and $\F_p$, for $p \in \Z$ prime. Another moment's thought shows that $\mono$ can certainly change in passing between different characteristics; e.g. if all but one of the coefficients of some generator of $I$ is divisible by a prime $p$. However, even excluding simple examples like this, by requiring that the generators of $I$ all have unit coefficients, $\mono$ still exhibits dependence on characteristic. We illustrate this with a few examples:

\begin{example} 

Let $S = \Z[x,y,z]$ be a polynomial ring in $3$ variables.

(1) Set $I := (x^3, y^3, z^3, xy(x + y + z))$. Then $xyz^2 \in \mono(IS_k)$ iff $\ch k = 2$ (consider $xy(x + y + z)^2 \in I$). Notice that $I$ is equi-generated, i.e. all minimal generators of $I$ have the same degree.

(2) For a prime $p \in \Z$, set $I_p := (x^p, y^p, x + y + z)$. Then $z^p \in \mono(I_pS_k)$ iff $\ch k = p$ (consider $(x + y + z)^p \in I_p$). If the presence of the linear form is objectionable, one may increase the degrees, e.g. $(x^{2p}, y^{2p}, x^2 + y^2 + z^2)$. 
\end{example}

From these examples we see that $\mono$ is highly sensitive to characteristic in general. However, this is not the whole story: cf. \Cref{colonCharInd} for one situation where taking $\mono$ is independent of characteristic.

\section{Betti tables}

We now consider how invariants of $I$ behave when passing to $\mono(I)$. As mentioned in \Cref{zeroRmk}, although $I$ and $\mono(I)$ are typically quite different, for Artinian graded ideals there is a much closer relationship:

\begin{proposition} \label{regMono}
Let $I$ be a graded $R$-ideal. Then $I$ is Artinian iff $\mono(I)$ is Artinian.
In this case, $\reg(R/I) = \reg(R/\mono(I))$.
\end{proposition}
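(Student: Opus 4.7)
The plan is to reduce both assertions to a single observation: for any $N \in \N$, the containment $\m^N \subseteq I$ holds if and only if $\m^N \subseteq \mono(I)$. One direction is free from $\mono(I) \subseteq I$; for the other, $\m^N$ is itself a monomial ideal, so if $\m^N \subseteq I$ then the monomial generators of $\m^N$ all lie in $\mono(I)$ by definition. (Equivalently, invoke \Cref{basicProp}(2) to get $\sqrt{\mono(I)} = \mono(\sqrt I)$, which equals $\m$ iff $\sqrt I = \m$.)

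Since a graded ideal $J$ is Artinian iff $\m^N \subseteq J$ for some $N$, the biconditional on Artinianity is immediate. For the regularity equality, I would invoke the standard fact that for a graded Artinian quotient $R/J$,
\[
\reg(R/J) \;=\; \max\{d : (R/J)_d \neq 0\} \;=\; \min\{N : \m^{N+1} \subseteq J\} \;-\; 1,
\]
so that $\reg(R/J)$ is controlled precisely by the least $N$ for which $\m^N$ sits inside $J$. Combined with the observation above, this forces $\reg(R/I) = \reg(R/\mono(I))$.

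If the socle characterization of regularity is not to be assumed, it can be justified briefly using the Koszul complex: writing $s = \max\{d : (R/J)_d \neq 0\}$, computing $\Tor_i^R(R/J, k)$ as the homology of $K_\bullet(x_1,\ldots,x_n) \otimes R/J$ shows $\beta_{i,j}(R/J) = 0$ whenever $j > i+s$, giving $\reg(R/J) \le s$; the reverse inequality follows since the top nonzero piece $(R/J)_s$ lies in the socle and therefore contributes nontrivially to $\beta_{n, n+s}(R/J)$.

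There is no substantial obstacle here; the content is simply the remark that powers of $\m$ are monomial ideals, which collapses the otherwise subtle relationship between $I$ and $\mono(I)$ into equality of a single numerical invariant at the top of the socle. The only point requiring any care is confirming that $\mono(I)$ is still graded when $I$ is (which follows because monomials are homogeneous), so that the socle-degree formula for regularity applies to $\mono(I)$ as well.
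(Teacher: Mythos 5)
Your proof is correct, and for the Artinian biconditional it is identical to the paper's (powers of $\m$ are monomial ideals, so $\m^s \subseteq I$ iff $\m^s \subseteq \mono(I)$). For the regularity equality the paper argues by two inequalities: the surjection $R/\mono(I) \twoheadrightarrow R/I$ gives $\reg(R/\mono(I)) \ge \reg(R/I)$, and then a standard monomial of $\mono(I)$ in top degree, being a monomial outside $\mono(I)$ and hence outside $I$, gives the reverse. You instead note that for an Artinian graded quotient the top nonzero degree is determined by the least power of $\m$ contained in the ideal, and that this least power is the same for $I$ and $\mono(I)$; this is a slightly more symmetric packaging of the same underlying observation (a monomial lies in $I$ iff it lies in $\mono(I)$) and yields the equality in one stroke instead of two inequalities. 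One small correction: your displayed formula is off by one — since $(R/J)_d = 0$ for all $d > s$ exactly when $\m^{s+1} \subseteq J$, the correct statement is $\reg(R/J) = \min\{N : \m^N \subseteq J\} - 1$, equivalently $\min\{N : \m^{N+1} \subseteq J\}$ with no ``$-1$''. This does not affect the argument, which only uses that the top nonzero degree is a fixed function of that minimum. Your Koszul-complex justification of the socle-degree characterization of regularity is fine; the paper simply quotes that fact.
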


\begin{proof}
Since $I \subseteq \m$ is graded, $I$ is Artinian iff $\m^s \subseteq I$ for some $s > 0$. This occurs iff $\m^s \subseteq \mono(I)$ for some $s > 0$ iff $\mono(I)$ is Artinian. 

Next, recall that if $M = \bigoplus M_i$ is Artinian graded, then the regularity of $M$ is $\reg M = \max \{i \mid M_i \ne 0 \}$.
The inclusion $\mono(I) \subseteq I$ induces a (graded) surjection $R/\mono(I) \twoheadrightarrow R/I$,
which shows that $\reg(R/\mono(I)) \ge \reg(R/I)$. 
Now if $u \in R$ is a standard monomial of $\mono(I)$ of top degree ($= \reg(R/\mono(I))$), then $u \not \in 
\mono(I) \implies u \not \in I$, hence $\reg(R/I) \ge \deg u = \reg(R/\mono(I))$.
\end{proof}

A restatement of \Cref{regMono} is that for any Artinian graded ideal $I$, the graded Betti tables of 
$I$ and $\mono(I)$ have the same number of rows and columns (since any Artinian ideal has projective dimension 
$n = \dim R$). However, it is not true (even in the Artinian case) that the Betti tables of $I$ and $\mono(I)$ have the 
same shape (= (non)zero pattern) -- e.g. take an ideal $I'$ with $\mono(I') = 0$, and 
consider $I := I' + \m^N$ for $N \gg 0$. Despite this, there is one positive result in this direction:

\begin{proposition} \label{BettiTopNonVanishing}
Let $I$ be an Artinian graded $R$-ideal. Then $\beta_{n,j}(R/\mono(I)) \ne 0 \implies \beta_{n,j}(R/I) \ne 0$, 
for any $j$.
\end{proposition}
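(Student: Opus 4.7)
The plan is to translate the top Betti number into the dimension of a graded piece of the socle, and then exploit the fact that socle elements of a quotient by a monomial ideal can always be taken to be monomials.

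First, I would recall the standard identification
\[ \beta_{n,j}(R/J) = \dim_k \soc(R/J)_{j-n} \]
for any Artinian graded ideal $J \subseteq R$, where $\soc(R/J) = (J : \m)/J$. This is immediate from the Koszul resolution of $k = R/\m$: the top Koszul module is $R(-n)$ and there is nothing above it, so $\mathrm{Tor}_n^R(k, R/J)_j$ computes precisely the kernel of multiplication by $\m$ on $(R/J)_{j-n}$. Applying this with $J = \mono(I)$, the hypothesis produces a nonzero socle class in $R/\mono(I)$ of degree $d := j-n$.

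Because $\mono(I)$ is a monomial ideal, $R/\mono(I)$ is $\mathbb{Z}^n$-graded, so its socle is itself a $\mathbb{Z}^n$-graded subspace and hence spanned by (classes of) standard monomials. Choose a monomial $u \in R_d$ representing such a socle class: $u \notin \mono(I)$ but $x_i u \in \mono(I)$ for every $i$. Since $\mono(I) \subseteq I$, we get $x_i u \in I$ for all $i$, while $u \notin I$ --- for otherwise the monomial $u$ would lie in $\mono(I)$, contradicting its choice. Thus $u$ represents a nonzero element of $\soc(R/I)_d$, and translating back yields $\beta_{n,j}(R/I) \ne 0$.

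There is no serious obstacle here; the main point is really the dictionary between top Betti numbers and the graded socle, combined with the observation that the socle of a monomial-ideal quotient has a monomial basis. That is exactly what allows the socle witness in $R/\mono(I)$ to be transported intact to $R/I$, and it is also what would fail for intermediate Betti numbers --- consistent with the counterexample sketched in the paragraph preceding the proposition.
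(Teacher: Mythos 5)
Your proof is correct and follows essentially the same route as the paper: identify $\beta_{n,j}$ with the graded socle, use that $\soc(R/\mono(I))$ has a monomial basis (being $\mathbb{Z}^n$-graded) to pick a monomial witness $u$, and observe that $x_iu\in\mono(I)\subseteq I$ for all $i$ while $u\notin I$ since otherwise the monomial $u$ would lie in $\mono(I)$. The only difference is cosmetic: you carry the correct degree shift $\soc(R/J)_{j-n}$ explicitly, where the paper's phrasing elides it; this does not affect the argument.
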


\begin{proof}
Notice that $\beta_{n,j}(R/I) \ne 0$ iff the socle of $R/I$ contains a nonzero form of degree $j$. 
Let $m \in R$ be a monomial with $\overline{0} \ne \overline{m} \in \soc(R/\mono(I))$ and $\deg m = j$. Then 
$m \in (\mono(I) :_R \m) \setminus \mono(I)$, hence $m \in (I :_R \m) \setminus I$ as well,
i.e. $\overline{0} \ne \overline{m} \in \soc(R/I)$.
\end{proof}

\begin{corollary} \label{levelProp}
Let $I$ be an Artinian graded level $R$-ideal (i.e. $\soc(R/I)$ is nonzero in only one degree). Then 
$\mono(I)$ is also level, with the same socle degree as $I$.
\end{corollary}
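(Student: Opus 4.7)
The plan is to deduce the corollary directly from \Cref{regMono} and \Cref{BettiTopNonVanishing}. Let $d := \reg(R/I)$; by the level hypothesis, $\soc(R/I)$ is concentrated in degree $d$. Since $I$ is Artinian graded, \Cref{regMono} gives that $\mono(I)$ is also Artinian graded with $\reg(R/\mono(I)) = d$. As the regularity of an Artinian graded module is the top degree in which it is nonzero, this immediately shows $\soc(R/\mono(I)) \ne 0$ and contains a nonzero element in degree $d$.

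It remains to show that $\soc(R/\mono(I))$ vanishes in every other degree. I would argue by contradiction: suppose $\soc(R/\mono(I))$ has a nonzero element of degree $j$. Translating to Betti numbers (as in the opening line of the proof of \Cref{BettiTopNonVanishing}), this says $\beta_{n,j}(R/\mono(I)) \ne 0$. Applying \Cref{BettiTopNonVanishing} transfers this to $\beta_{n,j}(R/I) \ne 0$, i.e.\ $\soc(R/I)$ has a nonzero element in degree $j$. But $I$ is level with socle degree $d$, forcing $j = d$.

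Thus every nonzero socle degree of $R/\mono(I)$ equals $d$, so $R/\mono(I)$ is level with socle degree $d$, matching that of $R/I$. There is essentially no obstacle: the corollary is a two-line consequence of the preceding propositions, with \Cref{BettiTopNonVanishing} providing the socle-transfer in one direction and \Cref{regMono} ensuring degree $d$ is actually attained on the $\mono$ side.
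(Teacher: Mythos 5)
Your proof is correct and follows essentially the same route as the paper, which deduces the corollary immediately from \Cref{BettiTopNonVanishing} (the transfer of socle degrees from $R/\mono(I)$ to $R/I$). Your additional appeal to \Cref{regMono} just makes explicit the (implicit) fact that $\soc(R/\mono(I))$ is nonzero and attains degree $d$, which also follows simply from $R/\mono(I)$ being a nonzero Artinian module.
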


\begin{proof}
Follows immediately from \Cref{BettiTopNonVanishing}.
\end{proof}

We illustrate these statements with some examples of how the Betti tables of $R/I$ and $R/\mono(I)$ can differ:
\vspace{-0.1cm}

\begin{example}
Let $R = k[x,y,z,w]$, $J = I_2
\begin{pmatrix}
x & y^2 & yw & z \\
y & xz & z^2 & w
\end{pmatrix}$ the ideal of the rational quartic curve in $\PP^3$, and $I = J + (x^2, y^4, z^4, w^4)$. 
Then the Betti tables of $R/I$ and $R/\mono(I)$ respectively in Macaulay2 format are:
\begin{verbatim}
              0 1  2  3 4               0  1  2  3 4
       total: 1 7 15 13 4        total: 1 11 28 26 8
           0: 1 .  .  . .            0: 1  .  .  . .
           1: . 2  .  . .            1: .  1  .  . .
           2: . 3  5  1 .            2: .  2  1  . .
           3: . 2  5  4 1            3: .  6 10  5 1
           4: . .  4  5 1            4: .  2 14 14 3
           5: . .  1  3 2            5: .  .  3  7 4
\end{verbatim}
Notice that $\beta_{1,5}(R/\mono(I)) = 2 \ne 0 = \beta_{1,5}(R/I)$, and likewise $\beta_{3,5}(R/I) = 1 \ne 0 = \beta_{3,5}(R/\mono(I))$.
In addition, $\beta_{1,2}, \beta_{1,3}, \beta_{2,4}$ for $R/\mono(I)$ are all strictly smaller than their counterpart for $R/I$ (and still nonzero).
\end{example}

\begin{example}
Let $R = k[x,y,z]$, $\ell \in R_1$ a general linear form (e.g. $\ell = x + y + z$), and $I = (x \ell, y \ell, z \ell) + (x,y,z)^3$. Then $\mono(I) = (x,y,z)^3$, and the Betti tables of $R/I$ and $R/\mono(I)$ respectively are:
\begin{verbatim}
                0 1  2 3                  0  1  2 3
         total: 1 7 10 4           total: 1 10 15 6
             0: 1 .  . .               0: 1  .  . .
             1: . 3  3 1               1: .  .  . .
             2: . 4  7 3               2: . 10 15 6
\end{verbatim}
Here $\mono(I)$ is level, but $I$ is not. This shows that the converse to \Cref{levelProp} is not true in general.
\end{example}

\begin{example}
We revisit \Cref{twoQuadricsEx}. Since $\mono(I)$ is a power of the maximal ideal, $R/\mono(I)$ has a
linear resolution, whereas $R/I$ has a Koszul resolution (with no linear forms), so in view of \Cref{BettiTopNonVanishing}, the 
Betti tables of $R/I$ and $R/\mono(I)$ have as disjoint shapes as possible. Thus no analogue of \Cref{BettiTopNonVanishing}
can hold for $\beta_i$, $i < n$, in general.
\end{example}

From the examples above, one can see that the earlier propositions on Betti tables are fairly sharp. Another interesting pattern observed above is that even when the ideal-theoretic description of $\mono(I)$ became simpler than that of $I$, the Betti table often grew worse (e.g. had larger numbers on the whole). This leads to some natural refinements of \Cref{Q4,Q6}:
\vspace{0.2cm}
\begin{enumerate}
\setcounter{enumi}{\theenumTemp}
\item \label[question]{Q7} Are the total Betti numbers of $\mono(I)$ always at least those of $I$?

\item \label[question]{Q8} Does $\mono(I)$ Gorenstein imply $I$ Gorenstein?
\end{enumerate}
\vspace{0.2cm}
\noindent
Notice that the truth of \Cref{Q7} implies the truth of \Cref{Q8}. As it turns out, the answer to these will follow from the answer to \Cref{Q5}.

\section{Uniqueness and the Gorenstein property}

\begin{lemma} \label{equalColonLemma}
Let $M$ be a monomial ideal, and $u_1 \ne u_2$ standard monomials of $M$. Then $\mono(M + (u_1 + u_2)) = M$ iff $M : u_1 = M : u_2$.
\end{lemma}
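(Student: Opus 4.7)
The plan is to prove each direction separately. Note first that $M \subseteq \mono(M + (u_1+u_2))$ is automatic, so the lemma's equality really asks when no monomial outside $M$ lies in $M + (u_1+u_2)$; also, since $M:u_i$ is a monomial ideal, the equality $M:u_1 = M:u_2$ may be checked on monomial generators.

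The forward direction is short. If $\mono(M + (u_1+u_2)) = M$ and $w$ is a monomial with $wu_1 \in M$, then both $w(u_1+u_2)$ and $wu_1$ lie in $M + (u_1+u_2)$, so their difference $wu_2$ does as well; being a monomial, $wu_2 \in \mono(M + (u_1+u_2)) = M$, whence $w \in M:u_2$. Symmetry finishes this direction.

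The backward direction is the main content. Assume $M:u_1 = M:u_2 =: C$, and suppose for contradiction some monomial $v \notin M$ lies in $M + (u_1+u_2)$, so $v = m + r(u_1+u_2)$ with $m \in M$ and $r \in R$. The key reduction uses the hypothesis: any $x^\alpha \in C$ appearing in $r$ gives $x^\alpha u_1, x^\alpha u_2 \in M$, so $c_\alpha x^\alpha(u_1+u_2)$ may be absorbed into $m$; thus we may assume that for every $\alpha$ in the support of $r$, both $x^\alpha u_1$ and $x^\alpha u_2$ lie outside $M$. Writing $u_i = x^{a_i}$, the only way two terms in the expansion $\sum c_\alpha(x^\alpha u_1 + x^\alpha u_2)$ can produce the same monomial is $x^\alpha u_1 = x^\beta u_2$ with $\beta = \alpha + (a_1 - a_2)$. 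The support of $r$ therefore breaks into disjoint paths along the direction $a_1 - a_2$; a path $\alpha^{(1)}, \ldots, \alpha^{(t)}$ with $\alpha^{(i+1)} = \alpha^{(i)} + (a_1 - a_2)$ contributes $t+1$ pairwise-distinct monomials $\mu_0, \ldots, \mu_t$ (all outside $M$) with coefficients $c_{\alpha^{(1)}}$, then $c_{\alpha^{(i)}} + c_{\alpha^{(i+1)}}$ for $1 \le i \le t-1$, then $c_{\alpha^{(t)}}$. Matching against $v - m$ forces each of these coefficients to vanish except possibly one that equals $1$ (for the index where $\mu_i = v$). A telescoping alternating-sum calculation then shows the resulting constraints are incompatible with the requirement that every $c_{\alpha^{(i)}} \ne 0$: paths not containing $v$ collapse via $c_{\alpha^{(i+1)}} = -c_{\alpha^{(i)}}$ starting from $c_{\alpha^{(1)}} = 0$, giving all $c$'s zero, while the path containing $v$ yields an equation $\sum_{i=0}^{t} \pm a_i = 0$ with exactly one $a_i = 1$, which is impossible. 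This contradiction forces $v \in M$.

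The main obstacle is the cancellation analysis in the backward direction. Since $r$ need not be a monomial, the monomial $v$ could in principle arise as a highly cancelled sum of terms from $\sum c_\alpha(x^\alpha u_1 + x^\alpha u_2)$. The reduction ``$\operatorname{supp}(r) \cap C = \emptyset$'', made possible precisely by the hypothesis $M:u_1 = M:u_2$, is what constrains the cancellations to the one-parameter family of shifts by $a_1 - a_2$, at which point the path/linear-recursion analysis finishes the proof.
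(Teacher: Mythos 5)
Your proof is correct, and its overall strategy matches the paper's up to the key finishing step, where you do something genuinely different. The forward direction is the same short argument ($w u_2 = w(u_1+u_2) - wu_1$ is a monomial in the ideal). In the backward direction you and the paper both begin with the same crucial reduction: since $M:u_1 = M:u_2$, every term of $r$ lying in the common colon can be absorbed into $M$, so all remaining products $x^\alpha u_1,\, x^\alpha u_2$ are standard monomials of $M$. The paper then finishes in one line, asserting that for a top-degree surviving term $g_i$ both $\overline{g_i u_1}$ and $\overline{g_i u_2}$ appear in $\overline{g}(\overline{u_1}+\overline{u_2})$ --- a claim that ignores possible cancellations $g_i u_1 = g_j u_2$, which can genuinely occur when $\deg u_1 = \deg u_2$ (e.g. $(x-y)(x+y)$ kills the mixed terms), so the paper's conclusion there needs a further word. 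You instead carry out the cancellation bookkeeping completely: the support of $r$ splits into arithmetic-progression paths in the direction $a_1 - a_2$, the resulting monomials $\mu_0,\dots,\mu_t$ on each path are distinct and standard, and comparing their coefficients with those of $v - m$ yields a linear system whose alternating sum telescopes to $0$ while the prescribed right-hand side sums to $\pm 1$, a contradiction valid in every characteristic. So your route is more detailed and in fact repairs the point the paper glosses over. One remark: after your reduction an even quicker finish is available --- the reduced identity reads $r'(u_1+u_2) = v$ exactly in $R$, since both sides are supported on standard monomials, and a binomial cannot divide a nonzero monomial in the UFD $R$; your path/telescoping analysis is essentially a hands-on proof of that divisibility fact.
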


\begin{proof}
$\Rightarrow$: By symmetry, it suffices to show that $M : u_1 \subseteq M : u_2$. Let $m$ be a monomial in $M : u_1$. Then $mu_2 = m(u_1 + u_2) - mu_1 \in \mono(M + (u_1 + u_2)) = M$, i.e. $m \in M : u_2$.

$\Leftarrow$: Passing to $R/M$, it suffices to show that $(\overline{u_1} + \overline{u_2})$ contains no monomials in $R/M$. Let $g \in R$ be such that $\overline{g}(\overline{u_1} + \overline{u_2}) \ne \overline{0} \in R/M$, and write $g = g_1 + \ldots + g_s$ as a sum of monomials. By assumption, $g_i u_1 \in M$ iff $g_i u_2 \in M$, so after removing some terms of $g$ we may assume there exists $g_i$ of top degree in $g$ such that $g_i u_1, g_i u_2 \not \in M$. But then $\overline{g_i} \overline{u_1}$ and $\overline{g_i} \overline{u_2}$ both appear as distinct terms in $\overline{g}(\overline{u_1} + \overline{u_2})$, so $\overline{g}(\overline{u_1} + \overline{u_2})$ is not a monomial in $R/M$.
\end{proof}


\begin{remark} \label{colonCharInd}
Since colons of monomial ideals are characteristic-independent, the second condition in \Cref{equalColonLemma} is independent of the ground field $k$. Thus if $I$ is an ideal defined over $\Z$ which is ``nearly" monomial (i.e. is generated by monomials and a single binomial), and $\mono(I)$ is as small as possible in one characteristic, then $\mono(I)$ is the same in all characteristics.
\end{remark}

\begin{remark} \label{decreaseSupportRmk}
For any polynomial $f \in R$, it is easy to see that 
\[
\mono(M + (f)) \supseteq M + \sum_{u \in \operatorname{terms}(f)} \mono(M : f - u)u
\]
If $f = u_1 + u_2$ is a binomial, then this simplifies to the statement that $\mono(M + (u_1 + u_2)) \supseteq M + (M : u_2)u_1 + (M : u_1)u_2$. However, equality need not hold: e.g. $M = (x^6, y^6, x^2y^4)$, $u_1 = x^2y$, $u_2 = xy^2$ (or even $M = (x^3, y^2)$, $u_1 = x$, $u_2 = y$ if one allows linear forms).
\end{remark}

\begin{theorem} \label{monoCharacterizationThm}
The following are equivalent for a monomial ideal $M$:

\begin{enumerate}[label={\upshape(\arabic*)}]
\item There exists a graded non-monomial ideal $I$ such that $\mono(I) = M$.
\item There exist $t \ge 2$ monomials $u_1, \ldots, u_t$ not contained in $M$ and of the same degree, such that $M : u_i = M : u_j$ for all $i, j$.
\item There exist monomials $u_1 \ne u_2$ with $u_1, u_2 \not \in M$, $\deg u_1 = \deg u_2$ and $M : u_1 = M : u_2$.
\end{enumerate}
\end{theorem}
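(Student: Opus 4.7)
The plan is to establish the chain (2) $\Rightarrow$ (3) $\Rightarrow$ (1) $\Rightarrow$ (2). The implication (2) $\Rightarrow$ (3) is immediate: simply take any two of the $u_i$. For (3) $\Rightarrow$ (1), the natural candidate is $I := M + (u_1 + u_2)$, which is graded because $\deg u_1 = \deg u_2$, and \Cref{equalColonLemma} yields $\mono(I) = M$ directly from the hypothesis $M : u_1 = M : u_2$. To see that $I$ is non-monomial, I note that if $I$ were monomial then $I = \mono(I) = M$; but $u_1, u_2 \not\in M$ forces $u_1 + u_2 \in I \setminus M$ (a polynomial lies in a monomial ideal iff each of its terms does), giving a contradiction.

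The substantive direction is (1) $\Rightarrow$ (2). Starting from a graded non-monomial $I$ with $\mono(I) = M$, I would extract the desired monomials from a carefully chosen element of $I$. Concretely, pick a homogeneous $f \in I$ whose image in $R/M$ is nonzero; after subtracting any terms of $f$ that already lie in $M$ (and hence in $I$), I may assume every monomial appearing in $f$ is a standard monomial of $M$. Write $f = \sum_{i=1}^{t} c_i u_i$ with $c_i \in k \setminus \{0\}$ and distinct $u_i \not\in M$, and choose $f$ so that $t$ is minimal. Since a single monomial lying in $I$ would already lie in $\mono(I) = M$, one has $t \ge 2$.

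The key claim is that such a minimally-supported $f$ automatically satisfies $M : u_i = M : u_j$ for all $i, j$, which yields (2) since the $u_i$ all share the common degree $\deg f$. Suppose otherwise: there is a monomial $m$ with, say, $m u_1 \in M$ but $m u_2 \not\in M$. Then $g := m f - \sum_{i : m u_i \in M} c_i m u_i \in I$ expands as $\sum_{i : m u_i \not\in M} c_i m u_i$, with strictly fewer than $t$ surviving terms (since the index $1$ was dropped) but at least one (since the index $2$ remains). If exactly one term survived, $g$ would be a scalar times a monomial lying in $\mono(I) = M$, contradicting that $m u_2 \not\in M$; hence $g$ has between $2$ and $t-1$ nonzero terms, contradicting the minimality of $t$.

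I expect the principal obstacle to lie in this minimality argument, specifically in verifying that after the reduction $f \mapsto g$ the resulting element simultaneously has strictly fewer supporting monomials and still fails to lie in $M$. The distinctness of the products $m u_i$ (inherited from the distinctness of the $u_i$, and unaffected by the characteristic of $k$) is what prevents accidental cancellation and makes the term-count argument rigorous.
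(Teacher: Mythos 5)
Your proposal is correct and follows essentially the same route as the paper: for (3) $\Rightarrow$ (1) you build $I = M + (u_1 + u_2)$ and invoke \Cref{equalColonLemma}, and for (1) $\Rightarrow$ (2) you take a homogeneous element of $I \setminus M$ of minimal support size and multiply by a monomial from one of the colon ideals to produce a shorter element of $I$, forcing all the colons $M : u_i$ to coincide. Your phrasing of the minimality step as a contradiction (with the one-term case handled via $\mono(I) = M$) is just a minor repackaging of the paper's direct argument.
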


\begin{proof}
(1) $\implies$ (2): Fix $f \in I \setminus M$ graded of minimal support size $t$ (so $t \ge 2$), and write $f = u_1 + \ldots + u_t$ where $u_i$ are standard monomials of $M$ of the same degree. Fix $1 \le i \le t$, and pick a monomial $m \in M : u_i$. Then $m(f - u_i) \in I$ has support size $< t$, so minimality of $t$ gives  $m(f - u_i) = \sum_{j \ne i} mu_j \in M$. Since $M$ is monomial, $mu_j \in M$ for each $j \ne i$, i.e. $m \in M : u_j$ for all $j$. By symmetry, $M : u_i = M : u_j$ for all $i, j$.

(2) $\implies$ (3): Clear.

(3) $\implies$ (1): Set $I := M + (u_1 + u_2)$, and apply \Cref{equalColonLemma}. Notice that $I$ is not monomial: if it were, then $u_1 + u_2 \in I \implies u_1 \in I \implies u_1 \in \mono(I) = M$, contradiction.
\end{proof}

\begin{corollary} \label{characterizationMonoGorenstein}
Let $I$ be an Artinian graded $R$-ideal. Then $\mono(I)$ is a complete intersection iff $\mono(I)$ is Gorenstein iff $I = \m^b  := (x_1^{b_1}, \ldots, x_n^{b_n})$ for some $b \in \N^n$.
\end{corollary}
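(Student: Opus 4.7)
The implications $I = \m^b \Rightarrow \mono(I)$ is a complete intersection $\Rightarrow \mono(I)$ is Gorenstein are immediate: $\m^b$ is already monomial so $\mono(\m^b) = \m^b$, which is a complete intersection on the regular sequence $x_1^{b_1}, \ldots, x_n^{b_n}$, and every complete intersection is Gorenstein. The substance of the corollary is to prove that $\mono(I)$ Gorenstein forces $I = \m^b$. I would split this into two independent steps: (a) any Artinian monomial Gorenstein ideal has the form $(x_1^{b_1}, \ldots, x_n^{b_n})$, and (b) the only graded ideal $I$ with $\mono(I) = (x_1^{b_1}, \ldots, x_n^{b_n})$ is $\m^b$ itself.

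For step (a), set $M := \mono(I)$. Since $M$ is monomial, $(M :_R \m) = \bigcap_i (M :_R x_i)$ is also monomial, so the socle of $R/M$ is spanned by standard monomials and the Gorenstein hypothesis provides a unique monomial socle generator $u = x_1^{a_1} \cdots x_n^{a_n}$. The heart of the argument is to show every standard monomial $v$ of $M$ divides $u$: choose a monomial $w$ of maximal degree with $vw \notin M$; maximality forces $x_i \cdot vw \in M$ for each $i$, so $vw$ lies in the socle and hence literally equals $u$ (both being monomials). This divisibility forces $x_i^{a_i+1} \in M$ for each $i$ (otherwise $x_i^{a_i+1}$ would be standard, yet fail to divide $u$), giving $(x_1^{a_1+1}, \ldots, x_n^{a_n+1}) \subseteq M$; conversely, any monomial $v \in M$ lying outside $(x_1^{a_1+1}, \ldots, x_n^{a_n+1})$ would divide $u$, forcing $u \in M$ and contradicting that $u$ is standard. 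Setting $b_i := a_i + 1$ completes (a).

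For step (b), I would invoke \Cref{monoCharacterizationThm}: it suffices to verify that condition (3) of that theorem fails for $M = (x_1^{b_1}, \ldots, x_n^{b_n})$. A short direct computation gives $M : (x_1^{c_1} \cdots x_n^{c_n}) = (x_1^{b_1 - c_1}, \ldots, x_n^{b_n - c_n})$ for any standard monomial (so each $0 \le c_i < b_i$); since this colon is generated by strictly positive pure powers of all $n$ distinct variables, the exponents $b_i - c_i$ can be recovered from the ideal, and equality of two such colons forces equality of the underlying monomials. Hence no non-monomial graded ideal has $\mono$ equal to $M$, so $I = \mono(I) = \m^b$.

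I expect step (a) to be the main obstacle, since it is the classical characterization of Artinian monomial Gorenstein ideals, with the crucial observation being that every standard monomial extends (by multiplication) to the unique socle generator. Step (b) is then a direct application of the uniqueness machinery developed in \Cref{monoCharacterizationThm,equalColonLemma}.
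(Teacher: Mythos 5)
Your proposal is correct and follows essentially the same route as the paper: both reduce the hard direction via \Cref{monoCharacterizationThm} to checking that distinct standard monomials of $\m^b$ have distinct colons into $\m^b$, together with the classification of Artinian Gorenstein monomial ideals as pure-power ideals. The only differences are cosmetic: you prove that classification from scratch (the paper simply cites irreducibility) and compute $\m^b : x^c$ explicitly where the paper just exhibits one separating monomial $x_j^{b_j-a_2}$.
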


\begin{proof}
Any Artinian Gorenstein monomial ideal is irreducible, hence is of the form $\m^b$, which is a complete intersection. By \Cref{monoCharacterizationThm}, it suffices to show that for $M := \m^b$, no distinct standard monomials of $M$ satisfy $M : u_1 = M : u_2$. To see this, note that since $u_1 \ne u_2$, there exists $j \in [n]$ such that $x_j$ appears to different powers $a_1 \ne a_2$ in $u_1$ and $u_2$, respectively. Taking $a_1 < a_2$ WLOG gives $x_j^{b_j - a_2} \in (M : u_2) \setminus (M : u_1)$.
\end{proof}

Combining the proofs above shows that an Artinian monomial ideal is not expressible as mono of \textit{any} non-monomial ideal iff it is Gorenstein:

\begin{corollary} \label{nongradedCharMono}
Let $M$ be an Artinian monomial ideal. Then there exists a non-monomial $R$-ideal $I$ with $\mono(I) = M$ iff $M$ is not Gorenstein (iff $M$ is not of the form $\m^b$ for $b \in \N^n$).
\end{corollary}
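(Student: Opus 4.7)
The plan is to prove the two directions separately: the reverse implication follows from \Cref{equalColonLemma} via a socle-based construction, while the forward implication adapts the (1)$\implies$(2) proof of \Cref{monoCharacterizationThm} to the non-graded setting.

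For $(\Leftarrow)$, I will assume $M$ is Artinian and not of the form $\m^b$. Artinian Gorenstein monomial ideals are irreducible, hence of the form $\m^b$, so the hypothesis forces $\dim_k \soc(R/M) \ge 2$. Because the socle of a quotient by a monomial ideal is spanned by residues of monomials, I can choose two distinct standard monomials $s_1, s_2$ of $M$ with $M : s_1 = M : s_2 = \m$. Setting $I := M + (s_1 + s_2)$, \Cref{equalColonLemma} yields $\mono(I) = M$, and $I$ is non-monomial because $s_1 + s_2 \in I \setminus M$ (otherwise $I = \mono(I) = M$, contradicting $s_1 + s_2 \notin M$).

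For $(\Rightarrow)$, I will argue the contrapositive: if $M = \m^b$ and $\mono(I) = M$, then $I$ must be monomial. Suppose otherwise, and pick $f \in I \setminus M$ of minimal support size $t$; after subtracting the terms of $f$ that lie in $M$ (still an element of $I \setminus M$), every term of $f$ is a standard monomial, so I can write $f = c_1 u_1 + \ldots + c_t u_t$ with $c_i \in k^\times$ and distinct standard $u_i$, where $t \ge 2$ since $t = 1$ would force $u_1 \in \mono(I) = M$. Following the proof of \Cref{monoCharacterizationThm}(1)$\implies$(2): for any monomial $m \in M : u_1$, the element $mf - m c_1 u_1 = \sum_{j \ge 2} m c_j u_j$ lies in $I$, its terms $m u_j$ are pairwise distinct (multiplication by $m$ is injective on monomials), so its support has size at most $t - 1$. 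Minimality forces this element into $M$, and since $M$ is monomial each $m u_j \in M$, giving $M : u_1 \subseteq M : u_j$ for all $j$, hence by symmetry $M : u_1 = M : u_2$. This contradicts the colon computation in the proof of \Cref{characterizationMonoGorenstein}, which shows that distinct standard monomials of $\m^b$ never have equal colons.

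The only subtle point is verifying that the two graded arguments borrowed above survive the removal of the grading: the support-shrinking step goes through because multiplication by a monomial causes no cancellation among distinct monomials, and the colon-separation argument for $\m^b$ in \Cref{characterizationMonoGorenstein} never actually used that the two monomials were of the same degree. Once these observations are made, the proof becomes a direct non-graded analogue of what has already been established.
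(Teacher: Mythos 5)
Your proof is correct and follows essentially the same route as the paper: the $(\Leftarrow)$ direction is identical (two distinct socle monomials have colon $\m$, then apply \Cref{equalColonLemma}), and your $(\Rightarrow)$ direction is precisely the ``combining the proofs above'' that the paper intends, namely rerunning the minimal-support argument of \Cref{monoCharacterizationThm}(1)$\implies$(2) without the grading and observing that the colon computation for $\m^b$ in \Cref{characterizationMonoGorenstein} never uses equality of degrees. If anything, your write-up is more careful than the paper's, which for $(\Rightarrow)$ simply cites \Cref{characterizationMonoGorenstein} (stated for graded $I$) and leaves the non-graded adaptation implicit.
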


\begin{proof}
$\Rightarrow$: If $M = \mono(I)$ were Gorenstein, then by \Cref{characterizationMonoGorenstein} $I$ is necessarily of the form $\m^b$, contradicting the hypothesis that $I$ is non-monomial. 

$\Leftarrow$: Since $M$ is not Gorenstein, there exist monomials $u_1 \ne u_2$ in the socle of $R/M$. Then $u_1 \in M : \m \implies \m \subseteq M : u_1 \implies \m = M : u_1$, and similarly $\m = M : u_2$. By \Cref{equalColonLemma}, $I := M + (u_1 + u_2)$ is a non-monomial ideal with $\mono(I) = M$.
\end{proof}

As evidenced by \Cref{decreaseSupportRmk}, finding $\mono(M + (f))$ can be subtle, for arbitrary $f \in R$. There is one situation however which can be determined completely:

\begin{theorem} \label{socTerms}
Let $M$ be a monomial ideal, and let $u_1, \ldots, u_r$ be the socle monomials of $R/M$. Let $f_j := \sum_{i=1}^r a_{ij} u_i$, $1 \le j \le s$, be $k$-linear combinations of the $u_i$. Then $\mono(M + (f_1, \ldots, f_s)) = M$ iff no standard basis vector $e_i$ is in the column span of the matrix $(a_{ij})$ over $k$.
\end{theorem}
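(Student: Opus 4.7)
The plan is to recast the problem as a linear algebra statement inside $R/M$, leveraging the defining feature of socle monomials: since each $u_i$ satisfies $\m \cdot u_i \subseteq M$, for any polynomial $h \in R$ one has $h u_i \equiv h(0) u_i \pmod{M}$, where $h(0) \in k$ denotes the constant term. This single observation is what allows arbitrary polynomial coefficients to collapse to scalars.

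The forward direction should be nearly immediate. If some $e_{i_0}$ lies in the column span of $(a_{ij})$, I can exhibit scalars $c_1, \ldots, c_s \in k$ with $\sum_j c_j a_{ij} = \delta_{i_0, i}$ for all $i$, so $\sum_j c_j f_j = u_{i_0}$ lies in $M + (f_1, \ldots, f_s)$; since $u_{i_0} \notin M$, this monomial forces $\mono(M + (f_1, \ldots, f_s))$ strictly beyond $M$.

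For the backward direction, I will argue contrapositively. Suppose there is a monomial $m \in \mono(M + (f_1, \ldots, f_s)) \setminus M$, and write $\bar m = \sum_j \bar h_j \bar f_j$ in $R/M$. Invoking the key observation on each socle class $\bar u_i$ appearing in $\bar f_j$, the product $\bar h_j \bar f_j$ depends only on $c_j := h_j(0)$, hence $\bar m = \sum_j c_j \bar f_j = \sum_i \bigl( \sum_j c_j a_{ij} \bigr) \bar u_i$. Now $\bar m$ is a single nonzero standard monomial, and $\bar u_1, \ldots, \bar u_r$ are $k$-linearly independent standard monomials of $R/M$, so comparing coordinates in the standard-monomial basis forces $m = u_{i_0}$ for some $i_0$ with $\sum_j c_j a_{ij} = \delta_{i_0, i}$. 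Thus $e_{i_0}$ lies in the column span of $(a_{ij})$, as required.

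The main obstacle is the collapse from polynomial to scalar coefficients in the backward direction; once that reduction is in place, everything else is routine linear algebra in the finite-dimensional $k$-vector space $R/M$. The resulting picture is clean: the forms $f_1, \ldots, f_s$ span a $k$-subspace $V \subseteq \soc(R/M)$, and new monomials enter $\mono(M + (f_1, \ldots, f_s))$ precisely when $V$ meets one of the coordinate axes of $\soc(R/M)$ spanned by the $\bar u_i$.
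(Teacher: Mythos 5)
Your proof is correct and follows essentially the same route as the paper: reduce polynomial coefficients to their constant terms using $\m \cdot \soc(R/M) \subseteq M$, so that any monomial of $M + (f_1,\ldots,f_s)$ outside $M$ must be a $k$-linear combination of the $\overline{u_i}$ in $R/M$, hence equal to some $u_{i_0}$, which is exactly the statement that $e_{i_0}$ lies in the column span of $(a_{ij})$. No changes needed.
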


\begin{proof}
Let $v \in \mono(M + (f_1, \ldots, f_r))$ be a monomial. Pick $g_i \in R$ and $m \in M$ such that $v = m + \sum_{j=1}^s g_j f_j$. Write $g_j = b_j + g_j'$ where $b_j \in k$ and $g_j' \in \m$. Since $f_j \in \soc(R/M)$, this is the same as saying $\overline{v} = \sum_{j=1}^s b_j \overline{f_j}$ in $R/M$. Since $v$ is a monomial, it must appear as one of the terms in the sum, hence $v$ must be a socle monomial of $M$. Then $v = u_i$ for some $i$, so $v$ corresponds to a standard basis vector $e_i$, and then writing $v$ as a $k$-linear combination of $f_j$ is equivalent to writing $e_i$ as a $k$-linear combination of the columns of $(a_{ij})$.
\end{proof}

\begin{example}
Let $R = k[x,y,z]$, and set $M := (x^2,xy,xz,y^2,z^2)$. Then $R/M$ has a 2-dimensional socle $k \langle x, yz \rangle$, so $\mono(M + (x + yz)) = M$ by \Cref{equalColonLemma} or \Cref{socTerms}. However, the only standard monomials of $M$ of the same degrees are $x, y, z$, which have distinct colons into $M$. Thus by \Cref{monoCharacterizationThm} there is no \textit{graded} non-monomial $I$ with $\mono(I) = M$.

In general, even if there are $u_1, u_2$ of the same degree with $M : u_1 = M : u_2$, there may not be any such in top degree: e.g. $(x^2, y^2)\m + (z^3)$ is equi-generated with symmetric Hilbert function $1, 3, 6, 3, 1$, but is not level (hence not Gorenstein).
\end{example}

\begin{example}
For an exponent vector $b \in \N^n$ with $b_i \ge 2$ for all $i$, the irreducible ideal $\m^b$ has a unique socle element $x^{b-1} := x_1^{b_1-1} \ldots x_n^{b_n - 1}$. Let $M := \m^b + (x^{b-1})$, which is Artinian level with $n$-dimensional socle $\langle \frac{x^{b-1}}{x_i} \mid 1 \le i \le n \rangle =: \langle s_1, \ldots, s_n \rangle$. By setting all socle elements of $M$ equal to each other we obtain a graded ideal $I := M + (s_1 - s_i \mid 2 \le i \le n)$. As all the non-monomial generators are in the socle of $R/M$, we may apply \Cref{socTerms}: the coefficient matrix $(a_{ij})$ is given by
\[
\begin{bmatrix*}[r]
1 & 1 & \ldots & 1 \; \\
-1 & 0 & \ldots & 0 \; \\
0 & -1 & \ldots & \vdots \,\; \\
\vdots & & \ddots & 0 \; \\
0 & \ldots & 0 & -1 \;
\end{bmatrix*}
\]
so by \Cref{socTerms}, $\mono(I) = M$. For $b = (2,2,3,3)$, the Betti tables of $R/I$ and $R/\mono(I)$ respectively are:

\begin{verbatim}
               0 1  2  3 4                 0 1  2  3 4
        total: 1 7 17 16 5          total: 1 5 10 10 4
            0: 1 .  .  . .              0: 1 .  .  . .
            1: . 2  .  . .              1: . 2  .  . .
            2: . 2  1  . .              2: . 2  1  . .
            3: . .  4  . .              3: . .  4  . .
            4: . 3 12 16 4              4: . .  1  2 .
            5: . .  .  . 1              5: . 1  4  8 4
\end{verbatim}

Notice that the (total) Betti numbers of $R/I$ are strictly greater than those of $R/\mono(I)$. This shows that the answer to \Cref{Q7} is false in general.
\end{example}

Finally, we include a criterion for recognizing when a monomial subideal of $I$ is equal to $\mono(I)$, in terms of its socle monomials:

\begin{proposition}
Let $I$ be an $R$-ideal and $M \subseteq I$ an Artinian monomial ideal. Then the following are equivalent:

\begin{enumerate}[label={\upshape(\alph*)}]
\item $M = \mono(I)$

\item $I$ contains no socle monomials of $M$

\item $(M : \m) \cap \mono(I) \subseteq M$
\end{enumerate}

\end{proposition}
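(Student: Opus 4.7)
My plan is to close the cycle of implications (a) $\Rightarrow$ (b) $\Rightarrow$ (c) $\Rightarrow$ (a), exploiting throughout that $M$, $\mono(I)$, and $M : \m$ are all monomial ideals, so containments between them can be tested on monomials.

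The first two implications are essentially unpackings of the definitions. For (a) $\Rightarrow$ (b), a socle monomial $u$ of $M$ is by definition a monomial in $M : \m$ not lying in $M$; assuming $M = \mono(I)$, if $u$ were in $I$ then $u \in \mono(I) = M$, a contradiction. For (b) $\Rightarrow$ (c), I would take an arbitrary monomial $u \in (M : \m) \cap \mono(I)$; then $u \in \mono(I) \subseteq I$, so if $u$ were outside $M$ it would be a socle monomial of $M$ lying in $I$, contradicting (b). Hence every such $u$ lies in $M$, and since $(M : \m) \cap \mono(I)$ is monomial, this gives the required inclusion.

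The real content is in (c) $\Rightarrow$ (a). The inclusion $M \subseteq \mono(I)$ is automatic because $M$ is a monomial subideal of $I$. For the reverse, I would argue by contradiction: assume $M \subsetneq \mono(I)$ and try to produce a monomial $u \in \mono(I) \setminus M$ with $\m u \subseteq M$, which then sits in $(M : \m) \cap \mono(I)$ but not in $M$, contradicting (c). The natural way to produce such a witness is to invoke the Artinian hypothesis on $M$: since $R/M$ is a finite-dimensional $k$-vector space, I can pick a monomial $u \in \mono(I) \setminus M$ of maximal total degree, and then for each variable $x_i$ the monomial $x_i u \in \mono(I)$ has strictly larger degree and so must lie in $M$.

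The only step where any real work happens is this last one: manufacturing the monomial $u$ with $\m u \subseteq M$ outside of $M$. This is also the unique place where the Artinian assumption is essentially used (without it one might have monomials in $\mono(I)\setminus M$ but no top-degree representative, and the socle-style argument would fail). Everything else is bookkeeping on monomial ideals, and I expect the write-up to be quite short once that witness is produced.
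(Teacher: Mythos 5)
Your proof is correct, but it closes the equivalences along a different route than the paper does. You prove the cycle (a) $\Rightarrow$ (b) $\Rightarrow$ (c) $\Rightarrow$ (a), with all the content in (c) $\Rightarrow$ (a): assuming $M \subsetneq \mono(I)$, you use the Artinian hypothesis (finitely many standard monomials of $M$) to pick a monomial $u \in \mono(I) \setminus M$ of maximal degree, so that $\m u \subseteq M$ and $u$ violates (c). This is a clean, self-contained socle-witness argument, and your identification of where Artinian-ness is essential is exactly right. The paper instead proves (b) $\Rightarrow$ (a) by invoking the correspondence between socle monomials of an Artinian monomial ideal and the components $\m^{b+1}$ of its unique irreducible decomposition (citing Miller--Sturmfels): under (b) the socle monomials of $R/M$ remain socle monomials of $R/\mono(I)$, forcing $\mono(I) \subseteq \bigcap \m^{b_i+1} = M$; and it handles (b) $\Leftrightarrow$ (c) separately by rewriting (b) as $\mono((M:\m) \cap I) \subseteq M$ and using that $\mono$ commutes with intersections (Proposition 1.1(3)). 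Your approach buys elementarity -- no irreducible decompositions and no appeal to earlier structural facts about $\mono$ -- while the paper's argument ties the statement to the combinatorial structure (irreducible components and socle) that the surrounding results emphasize, and in particular makes the equivalence of (b) and (c) a one-line consequence of the basic properties of $\mono$.
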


\begin{proof}
(a) $\implies$ (b), (a) $\implies$ (c): Clear.

(b) $\implies$ (a): Recall that for an Artinian monomial ideal $M$, a monomial $x^b$ is nonzero in $\soc(R/M)$ iff $\m^{b+1}$ appears in the unique irreducible decomposition of $M$ into irreducible monomial ideals (here $b+1 = (b_1 + 1, \ldots, b_n + 1)$: cf. \cite{MS}, Exercise 5.8). Let $x^{b_1}, \ldots, x^{b_r}$ be the socle monomials of $R/M$. Then by assumption $x^{b_1}, \ldots, x^{b_r}$ are also socle monomials of $R/\mono(I)$, so $\m^{b_1 + 1}, \ldots, \m^{b_r + 1}$ all appear in the irreducible decomposition of $\mono(I)$, hence $\mono(I) \subseteq M$.

(b) $\iff$ (c): Notice that (b) is equivalent to: any monomial $u \in (M : \m) \setminus M$ is not in $I$; or alternatively, any monomial in both $M : \m$ and $I$ is also in $M$; i.e. $\mono((M : \m) \cap I) \subseteq M$. Now apply \Cref{basicProp}(3). 
\end{proof}

\end{document}